\newtheorem{thm}{Theorem}[section]
\newtheorem{cor}[thm]{Corollary}
\newtheorem{defn}[thm]{Definition}
\newtheorem{rem}[thm]{\bf{Remark}}
\newtheorem{alg}{\bf{Algorithm}}
\numberwithin{equation}{section}
\begin{document}


\title{System of split variational inequality problems}
\author{Kaleem Raza Kazmi}

\thanks{{\scriptsize
\hskip -0.4 true cm MSC(2010): Primary: 47J53; Secondary: 90C25.
\newline Keywords: System of split variational inequality problems,  Iterative algorithm, Convergence analysis.}}

\maketitle

\begin{center}
{\it Department of
Mathematics, Aligarh Muslim University \\Aligarh 202002, India}\\

{ Email: krkazmi@gmail.com}
\end{center}

\begin{abstract}   In this paper, we introduce a system of split variational inequality problems in  real Hilbert spaces. Using projection method, we propose an iterative algorithm for the system of split variational inequality problems. Further, we prove that the  sequence generated by the   iterative algorithm converges  strongly to a   solution of the system of split variational inequality problems. Furthermore, we discuss some consequences of the main result. The iterative algorithms and results presented in this paper   generalize, unify and improve the previously known results of this area.
\end{abstract}

\vskip 0.2 true cm


\pagestyle{myheadings}
\markboth{\rightline {\scriptsize  Kazmi}}
         {\leftline{\scriptsize System of split variational inequality problems}}

\bigskip
\bigskip


\section{\bf Introduction}
\vskip 0.4 true cm

Throughout the paper unless otherwise stated, for each $s\in\{1,2,3,4\}$, let $H_s$  be a real Hilbert space  with inner product $\langle \cdot,\cdot \rangle $ and norm $\| \cdot \|$; let $C_s$  be a nonempty, closed and convex subset of $H_s$.

\vspace{.3cm}
The {\it variational inequality problem} (in short, VIP) is to find $x\in C_1$  such that
$$\langle h_1(x),y-x\rangle\geq 0,~~\forall y\in C_1, \leqno(1.1)$$
where $h_1:C_1\to H_1$ be a nonlinear mapping.

\vspace{.3cm}

Variational inequality theory introduced by Stampacchia {\cite{stam}} and Fichera {\cite{fich}} independently, in early sixties in potential theory and mechanics, respectively, constitutes a significant extension of variational principles. It has been shown that the variational inequality theory provides the natural, descent, unified and efficient framework for a general treatment of a wide class of unrelated linear and nonlinear problem arising in elasticity, economics, transportation, optimization, control theory and engineering sciences {\cite{bai,ben,crank,gian,glo,kiku}}. The development of variational inequality theory can be viewed as the simultaneous pursuit of two different lines of research. On the one hand, it reveals the fundamental facts on the qualitative behavior of solutions to important classes of problems. On the other hand, it enables us to develop highly efficient and powerful numerical methods to solve, for example, obstacle, unilateral, free and moving boundary value problems. In last five decades, considerable interest has been shown in developing various classes of variational inequality problems, both for its own sake and for its applications.\\

\vspace{.30cm}

In 1985, Pang {\cite{pang}} showed that a variety of equilibrium models, for example, the traffic equilibrium problem, the spatial equilibrium problem, the Nash equilibrium problem and the general equilibrium programming problem can be uniformly modelled as a variational inequality problem defined on the product sets. He decomposed the original variational inequality problem into a system of variational inequality problems and discuss the convergence of method of decomposition for system of variational inequality problems. Later, it was noticed that variational inequality problem over product sets and the system of variational inequality problems both are equivalent, see for applications {\cite{fer,nag,pang}}. Since then many authors, see for example {\cite{coh,fer,kas,nag}} studied the existence theory of various classes of system of variational inequality problems by exploiting fixed-point theorems and minimax theorems. On the other hand, a number of iterative algorithms have been constructed for approximating the solution of systems of variational inequality  problems, see {\cite{kaz8,kaz7,kaz6,kon,ver}} and the relevant references therein.

\vspace{.30cm}
 More precisely, the system of variational inequality problems (in short, SVIP) is to find $(x,y)\in C_1\times C_2$ such that
$$
\langle F(x,y), z_1-x\rangle\geq 0,~~\forall z_1\in C_1, \leqno(1.2)$$
$$\langle G(x,y), z_2-x\rangle\geq 0,~~\forall z_2\in C_2, \leqno(1.3)$$
where $F:C_1 \times C_2 \to H_1$ and $G :C_1 \times C_2 \to H_2$.  Verma {\cite{ver}}  studied the convergence analysis of an iterative method for a problem similar to SVIP(1.2)-(1.3) by using projection mappings.

\vspace{.30cm}

Recently, Censor {\it et al.} {\cite{cen3}} introduced   the following split variational inequality problem (in short, SpVIP): Find $x\in C_1$ such that
$$\langle h_1(x),z_1-x\rangle\geq 0,~~\forall z_1 \in C_1,\leqno(1.4)$$
and such that
$$y=Ax\in C_2~~{\rm solves}~\langle h_2(y),z_2-y\rangle\geq 0,~~\forall z_2 \in C_2,\leqno(1.5)$$
where $h_1:H_1\to H_1$ and $h_2:H_2\to H_2$ are nonlinear mappings  and $A:H_1\to H_2$ is a bounded linear operator. They studied some iterative methods for SpVIP(1.4)-(1.5).

\vspace{.3cm}
 SpVIP(1.4)-(1.5) is an important generalization of VIP(1.1). It also includes as special case, the split zero problem  and split feasibility problem  which has already been studied and used in practice as a model in intensity-modulated radiation therapy  treatment planning, see {\cite{cen2,cen1}}. For the further related work, we refer to see Moudafi {\cite{mou}},  Byrne {\it et al.} {\cite{byr}}, Kazmi and Rizvi {\cite{kaz5,kaz4,kaz3,kaz2}} and Kazmi {\cite{kaz1}}.

 \vspace{.3cm}
 Motivated by the work of Censor {\it et al.} {\cite{cen3}}, Kazmi {\cite{kaz1}}, Verma {\cite{ver}} and work going in this direction,  we introduce  the following  system of split variational inequality problems (in short, SSpVIP), which is a natural generalization of SpVIP(1.4)-(1.5):

\vspace{.3cm}

 Let $F:C_1\times C_2 \to H_1$, $G:C_1\times C_2 \to H_2$, $f:C_3\times C_4 \to H_3$ and $g:C_3\times C_4 \to H_4$ be nonlinear bifunctions and $A:H_1\to H_3$ and $B:H_2\to H_4$ be  bounded linear operators, then SSpVIP is to  find $(x,y) \in C_1 \times C_2$ such that
$$ \langle F(x,y), z_1-x\rangle\geq 0,~~\forall z_1\in C_1, \leqno(1.6)$$
and such that $(u,v)$ with $u=Ax \in C_3, ~~v=By \in C_4$ solves
$$\langle f(u,v), z_3-u\rangle\geq 0,~~\forall z_3\in C_3; \leqno(1.7)$$
$$ \langle G(x,y), z_2-y\rangle\geq 0,~~\forall z_2\in C_2, \leqno(1.8)$$
 and such that $(u,v)$ solves
$$\langle g(u,v), z_4-v\rangle\geq 0,~~\forall z_4\in C_4. \leqno(1.9)$$

\newpage
\noindent {\it Some special cases:}\\

\noindent {\bf I.}~~~If we set $H_2=H_1,~H_4=H_3,~C_2=C_1,~C_4=C_3,~G=F,~g=f,~B=A,$ and $y=x$,  then SSpVIP(1.6)-(1.9) reduces to the following spilt variational inequality problem: Find $x \in C_1$ such that
$$ \langle F(x,x), z_1-x\rangle\geq 0,~~\forall z_1\in C_1, \leqno(1.10)$$
and such that $u=Ax \in C_3$ solves
$$\langle f(u,u), z_3-u\rangle\geq 0,~~\forall z_3\in C_3. \leqno(1.11)$$
 The SpVIP(1.10)-(1.11) is new and different from SpVIP(1.4)-(1.5).

\vspace{.3cm}
\noindent {\bf II.}~~~If we set $H_3=H_1,~H_4=H_2,~C_3=C_1,~C_4=C_2,~f=F,~g=G,$ and $A=B=I$, identity mapping, then SSpVIP(1.6)-(1.9) reduces to the SVIP(1.2)-(1.3).

\vspace{.3cm}
 Using projection method, we propose an iterative algorithm for SSpVIP (1.6)-(1.9) and discuss some of its special cases.   Further, we prove that the  sequence generated by the   iterative algorithm converges  strongly to a   solution of SSpVIP(1.6)-(1.9). Furthermore, we discuss some consequences of the main result. The iterative algorithms and results presented in this paper   generalize, unify and improve the previously known results of this area, see for example {\cite{kaz1,ver}}.\\


\section{\bf {\bf \em{\bf Iterative Algorithms}}}
\vskip 0.4 true cm

For each $s\in\{1,2,3,4\}$, a mapping $P_{C_s}$ is said to be  {\it metric projection} of $H_s$ onto $C_s$ if for every point  $x_s \in H_s$, there exists a unique nearest point in $C_s$ denoted by $P_ {C_s} (x_s)$ such that
$$ \|x_s-P_{C_s}(x_s)\|\leq \|x_s-y_s\|, ~~ \forall  y_s \in C_s.$$
 It is well known that $P_{C_s}$ is nonexpansive mapping and satisfies
$$\langle x_s-y_s ,P_{C_s}(x_s)-P_{C_s}(y_s) \rangle \geq \|P_{C_s}(x_s)-P_{C_s}(y_s)\|^2, ~~\forall x_s,y_s \in H_s.\leqno(2.1)$$
Moreover, $P_{C_s}(x_s)$ is characterized by the following properties:
$$\langle x_s-P_{C_s}(x_s),y_s-P_{C_s}(x_s) \rangle \leq 0,\leqno(2.2)$$
and
$$\|x_s-y_s\|^2\geq \|x_s-P_{C_s}(x_s)\|^2 +\|y_s-P_{C_s}(x_s)\|^2,~~\forall x_s\in H_s,~y_s\in C_s.\leqno(2.3)$$

 \vspace{.3cm}
Further, it is easy to see that the following is true:
$$x {\rm ~is~a~solution~of~ VIP(1.1)}\Leftrightarrow x=P_{C_1} (x-\lambda h_1(x)),~~\lambda>0.\leqno(2.4)$$

Hence, SSpVIP(1.6)-(1.9) can be reformulated as follows:~~ Find $(x,y) \in C_1 \times C_2$ with $(u,v)=(A(x),B(y)) \in C_3 \times C_4$ such that
$$x= P_{C_1}(x- \rho F(x,y)),$$
$$u= P_{C_3}(u- \lambda f(u,v)),$$
$$y= P_{C_2}(y- \rho G(x,y)),$$
$$v= P_{C_4}(v- \lambda g(u,v)),$$
for $\rho,~\lambda >0$.

\vspace{0.3cm}
Based on above arguments, we propose the following iterative algorithm for approximating a solution to SSpVIP(1.6)-(1.9).

\vspace{0.3cm}
Let $\{\alpha_n\} \subseteq (0,1)$ be a sequence such that $\sum \limits^{\infty}_{n=1} \alpha_n=+\infty$, and let $\rho,~ \lambda,~ \gamma$ are parameters with positive values.

\vspace{0.3cm}
\begin{alg}
 Given $(x_0,y_0)\in C_1 \times C_2,$ compute the iterative sequence $\{(x_n, y_n)\}$ defined by the iterative schemes:
$$a_n= P_{C_1}(x_n- \rho F(x_n,y_n)),\leqno(2.5)$$
$$d_n= P_{C_2}(y_n- \rho G(x_n,y_n)),\leqno(2.6)$$
$$b_n= P_{C_3}(A(a_n)- \lambda f(A(a_n),B(d_n))), \leqno(2.7)$$
$$l_n= P_{C_4}(B(d_n)- \lambda g(A(a_n),B(d_n))),\leqno(2.8)$$
$$x_{n+1}=(1-\alpha_n)x_n +\alpha_n[a_n+\gamma A^*(b_n-A(a_n))] \leqno(2.9)$$
$$y_{n+1}=(1-\alpha_n)y_n +\alpha_n[d_n+\gamma B^*(l_n-B(d_n))] \leqno(2.10)$$
for all $n=0,1,2,.....~$ and $\rho,~\lambda,~\gamma >0$, where $A^*$ and $B^*$ are, respectively, the adjoint operator of $A$ and $B$.
\end{alg}

\noindent{\it Some special cases:}\\

If we set $H_3=H_1,~H_4=H_2,~C_3=C_1,~C_4=C_2,~f=F,~g=G,$ and $A=B=I$, identity mapping, then Algorithm 1 reduces to the following iterative algorithm for SVIP(1.2)-(1.3).

\vspace{.3cm}
\begin{alg} Given $(x_0,y_0)\in C_1 \times C_2,$ compute the iterative sequence $\{(x_n, y_n)\}$ defined by the iterative schemes:
$$a_n= P_{C_1}(x_n- \rho F(x_n,y_n)),$$
$$d_n= P_{C_2}(y_n- \rho G(x_n,y_n)),$$
$$x_{n+1}=(1-\alpha_n)x_n +\alpha_na_n $$
$$y_{n+1}=(1-\alpha_n)y_n +\alpha_nd_n $$
for all $n=0,1,2,.....,$ and  $\rho >0$.
\end{alg}

\vspace{.3cm}
If we set $H_2=H_1,~H_4=H_3,~C_2=C_1,~C_4=C_3,~G=F,~g=f,~B=A,$ and $y=x$,  then Algorithm 1 reduces to the following iterative algorithm for SpVIP(1.10)-(1.11).

\vspace{0.3cm}
\begin{alg} Given $x_0\in C_1,$ compute the iterative sequence $\{x_n\}$ defined by the iterative schemes:
$$a_n= P_{C_1}(x_n- \rho F(x_n,x_n)),$$
$$b_n= P_{C_3}(A(a_n)- \lambda f(A(a_n),A(a_n))),$$
$$x_{n+1}=(1-\alpha_n)x_n +\alpha_n[a_n+\gamma A^*(b_n-A(a_n))]$$
for all $n=0,1,2,.....~$ and $\rho,~\lambda,~\gamma >0$, where $A^*$ is the adjoint operator of $A$.
\end{alg}

\vspace{.3cm}
\begin{defn} A mapping $F:H_1\times H_2 \rightarrow H_1$ is said to be
\begin{enumerate}
\item [{(i)}] $\alpha_1$-{\it strongly monotone} in the first argument, if there exists a constant $\alpha_1>0$ such that
$$\langle F(x_1,y)-F(x_2,y),x_1-x_2\rangle \geq \alpha_1 \Vert x_1-x_2 \Vert^2,~~~ \forall x_1,x_2\in H_1, y\in H_2;$$
\item [{(ii)}] $\alpha$-{\it strongly monotone} in the second argument, if there exists  a constant  $\alpha>0$ such that
$$\langle F(x,y_1)-F(x,y_2),y_1-y_2\rangle \geq \alpha \Vert y_1-y_2 \Vert^2,~~~ \forall  x\in H_1,~y_1,y_2\in H_2;$$
\item [{(iii)}]  $(\beta_1,\epsilon_1)$-{\it Lipschitz continuous}, if there exist constants  $\beta_1 >0, \epsilon_1 >0$ such that
$$\|F(x_1,y_1)-F(x_2,y_2)\|\leq \beta_1 \|x_1-x_2\|+\epsilon_1 \|y_1-y_2\|,~~~ \forall x_1,x_2\in H_1,~ y_1,y_2\in H_2.$$
\end{enumerate}
\end{defn}

\vspace{.3cm}
\begin{defn} A mapping $S:H_1\times H_1 \rightarrow H_1$ is said to be
\begin{enumerate}
\item [{(i)}] $\eta$-{\it strongly mixed monotone}, if there exists a constant $\eta>0$ such that
$$\langle S(x_1,x_1)-S(x,x),x_1-x\rangle \geq \eta\Vert x_1-x \Vert^2,~~~ \forall x_1,x\in H_1;$$
\item [{(ii)}]  $\xi$-{\it mixed Lipschitz continuous}, if there exists a constant  $\xi >0$ such that
$$\|S(x_1,x_1)-S(x,x)\|\leq \xi \|x_1-x\|,~~~ \forall x_1,x\in H_1.$$
\end{enumerate}
\end{defn}

\section{\bf {\bf \em{\bf Results}}}
\vskip 0.4 true cm

\vspace{.3cm}
Now, we prove that the iterative sequence generated by Algorithm 1 converges strongly to a solution of SSpVIP(1.6)-(1.9).

\vspace{.3cm}
\begin{thm} \label{main} For each  $s\in\{1,2,3,4\},$ let $C_{s}$ be a nonempty, closed and convex subset of real Hilbert space $H_{s}$;  let $F: H_1\times H_2\to H_1$ be $\alpha_{1}$-strongly monotone in the first argument and $(\beta_{1},\epsilon_{1})$-Lipschitz continuous;
  let $G: H_1\times H_2\to H_2$ be $\alpha_{2}$-strongly monotone in the second argument and $(\epsilon_{2},\beta_{2})$-Lipschitz continuous;
let $f: H_3\times H_4\to H_3$ be $\sigma_{1}$-strongly monotone in the first argument and $(\mu_{1},\nu_{1})$-Lipschitz continuous, and
let $g: H_3\times H_4\to H_4$ be $\sigma_{2}$-strongly monotone in the second argument and $(\nu_{2},\mu_{2})$-Lipschitz continuous. Let $A:H_1 \to H_3$ and $B:H_2 \to H_4$ be bounded linear operators. Suppose $(x,y) \in C_1 \times C_2$ is a solution to SSpVIP(1.6)-(1.9) then the sequence $\{(x_n, y_n)\}$ generated by Iterative algorithm 1  converges strongly to $(x,y)$ provided that  for $i\in \{1,2\},~~j \in \{1,2\}\backslash \{i\}$, the constants $\rho, \lambda, \gamma$  satisfy the conditions:
$$\max_{1\leq i \leq 2}\{L_i-\Delta_i\}<\rho < \min_{1\leq i \leq 2}\{L_i+\Delta_i\}$$
$$ L_i=\frac{\alpha_i-p_iq_i}{\beta_i^2 -p_i^2};~~~\Delta_i=\frac{\sqrt{(\alpha_i-p_iq_i)^2-(\beta_i^2 -p_i^2)(1-q_i^2)}}{\beta_i^2 -p_i^2};$$
$$\alpha_i>p_iq_i+\sqrt{(\beta_i^2 -p_i^2)(1-q_i^2)};~~\beta_i > p_i; q_i<1$$
$$p_i=\frac{(\delta_j+2 \lambda \nu_i)\epsilon_j}{\delta_i+2 \lambda \nu_j};~~~q_i=\frac{1}{\delta_i+2 \lambda \nu_j}; ~~\delta_i=(1+2\theta_{i+2});$$
$$\theta_{i+2}=\sqrt{1-2\lambda\sigma_i+\lambda^2\mu_i^2};~~ \lambda > 0;~~ \gamma \in \left(0, \min\left\{\frac{2}{\|A\|^2},\frac{2}{\|B\|^2}\right\}\right).$$
\end{thm}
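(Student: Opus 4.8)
The plan is to reduce Iterative algorithm 1 to a one-step linear contraction estimate for the total error $e_n:=\|x_n-x\|+\|y_n-y\|$ and then close the argument with a standard real-sequence lemma. I use throughout the fixed-point reformulation of SSpVIP(1.6)--(1.9) recorded just before Algorithm 1: since $(x,y)$ solves the system and $(u,v)=(Ax,By)$, we have $x=P_{C_1}(x-\rho F(x,y))$, $y=P_{C_2}(y-\rho G(x,y))$, $u=P_{C_3}(u-\lambda f(u,v))$, $v=P_{C_4}(v-\lambda g(u,v))$; in particular the iterates of the algorithm evaluated at $(x_n,y_n)=(x,y)$ return $(x,y)$ (the correction terms $A^*(b_n-Aa_n)$ and $B^*(l_n-Bd_n)$ vanish there), so $(x,y)$ is a fixed point of the iteration map.

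First I would estimate $\|a_n-x\|$ and $\|d_n-y\|$. Using that $P_{C_1}$ is nonexpansive, splitting $F(x_n,y_n)-F(x,y)=[F(x_n,y_n)-F(x,y_n)]+[F(x,y_n)-F(x,y)]$, expanding $\|(x_n-x)-\rho(F(x_n,y_n)-F(x,y_n))\|^2$ and using $\alpha_1$-strong monotonicity of $F$ in the first argument together with $(\beta_1,\epsilon_1)$-Lipschitz continuity, one obtains
$$\|a_n-x\|\le\theta_1\|x_n-x\|+\rho\epsilon_1\|y_n-y\|,\qquad\theta_1:=\sqrt{1-2\rho\alpha_1+\rho^2\beta_1^2},$$
and symmetrically $\|d_n-y\|\le\theta_2\|y_n-y\|+\rho\epsilon_2\|x_n-x\|$ with $\theta_2:=\sqrt{1-2\rho\alpha_2+\rho^2\beta_2^2}$. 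Running the same argument through $P_{C_3},f$ and $P_{C_4},g$, and using $\|Aa_n-u\|=\|A(a_n-x)\|\le\|A\|\,\|a_n-x\|$ and $\|Bd_n-v\|\le\|B\|\,\|d_n-y\|$, gives $\|b_n-u\|\le\theta_3\|A\|\,\|a_n-x\|+\lambda\nu_1\|B\|\,\|d_n-y\|$ and $\|l_n-v\|\le\theta_4\|B\|\,\|d_n-y\|+\lambda\nu_2\|A\|\,\|a_n-x\|$, where $\theta_{i+2}:=\sqrt{1-2\lambda\sigma_i+\lambda^2\mu_i^2}$.

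The step that handles the split structure is next. Put $z_n:=a_n+\gamma A^*(b_n-Aa_n)$ and $w_n:=d_n+\gamma B^*(l_n-Bd_n)$, so (2.9)--(2.10) read $x_{n+1}=(1-\alpha_n)x_n+\alpha_n z_n$, $y_{n+1}=(1-\alpha_n)y_n+\alpha_n w_n$. The key identity is $z_n-x=(I-\gamma A^*A)(a_n-x)+\gamma A^*(b_n-u)$. Since $A^*A$ is self-adjoint with norm $\|A\|^2$ and $0<\gamma<2/\|A\|^2$, the spectrum of $I-\gamma A^*A$ lies in $(-1,1]$, so $\|I-\gamma A^*A\|\le1$; hence
$$\|z_n-x\|\le\|a_n-x\|+\gamma\|A\|\,\|b_n-u\|\le(1+\gamma\|A\|^2\theta_3)\|a_n-x\|+\gamma\|A\|\,\|B\|\,\lambda\nu_1\|d_n-y\|.$$
Because $\gamma\|A\|^2<2$ and, from the two bounds on $\gamma$, $\gamma\|A\|\,\|B\|<2$, this yields $\|z_n-x\|<\delta_1\|a_n-x\|+2\lambda\nu_1\|d_n-y\|$ with $\delta_1=1+2\theta_3$, and symmetrically $\|w_n-y\|<\delta_2\|d_n-y\|+2\lambda\nu_2\|a_n-x\|$ with $\delta_2=1+2\theta_4$. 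Using $\|I-\gamma A^*A\|\le1$ rather than the crude triangle-inequality bound is what keeps the coefficient of $\|a_n-x\|$ of the form $1+O(\theta_3)$ instead of $O(1)$; this is the crux of the whole estimate.

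It then remains to assemble these. Substituting the bounds on $\|a_n-x\|,\|d_n-y\|$ into those on $\|z_n-x\|,\|w_n-y\|$ and adding gives
$$\|z_n-x\|+\|w_n-y\|<c_1\|x_n-x\|+c_2\|y_n-y\|$$
with $c_1=(\delta_1+2\lambda\nu_2)\theta_1+(\delta_2+2\lambda\nu_1)\rho\epsilon_2$ and $c_2=(\delta_1+2\lambda\nu_2)\rho\epsilon_1+(\delta_2+2\lambda\nu_1)\theta_2$. Dividing $c_1<1$ by $\delta_1+2\lambda\nu_2$ rewrites it as $\theta_1<q_1-p_1\rho$ (with $p_1,q_1$ as in the statement, $i=1,j=2$), and squaring — legitimate on the range where $q_1-p_1\rho>0$ — turns it into $(\beta_1^2-p_1^2)\rho^2-2(\alpha_1-p_1q_1)\rho+(1-q_1^2)<0$; under $\beta_1>p_1$, $q_1<1$ and $\alpha_1>p_1q_1+\sqrt{(\beta_1^2-p_1^2)(1-q_1^2)}$ this quadratic inequality holds exactly for $L_1-\Delta_1<\rho<L_1+\Delta_1$, and likewise $c_2<1$ holds exactly for $L_2-\Delta_2<\rho<L_2+\Delta_2$. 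Thus the stated hypotheses on $\rho,\lambda,\gamma$ are precisely what guarantees $\kappa:=\max\{c_1,c_2\}<1$. Finally, from $x_{n+1}=(1-\alpha_n)x_n+\alpha_n z_n$ and $y_{n+1}=(1-\alpha_n)y_n+\alpha_n w_n$ we get $e_{n+1}\le(1-\alpha_n)e_n+\alpha_n(\|z_n-x\|+\|w_n-y\|)\le(1-(1-\kappa)\alpha_n)e_n$, hence $e_{n+1}\le\prod_{k=0}^{n}(1-(1-\kappa)\alpha_k)\,e_0$; since $\alpha_n\in(0,1)$, $\kappa<1$ and $\sum\alpha_n=\infty$, the product tends to $0$, so $e_n\to0$, i.e.\ $(x_n,y_n)\to(x,y)$ strongly. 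The main obstacle is the split-correction estimate of the third paragraph and, secondarily, the bookkeeping needed to recognise the resulting scalar conditions in the stated form of $L_i,\Delta_i,p_i,q_i$.
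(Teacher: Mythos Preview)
Your proof is correct and follows essentially the same route as the paper: the same fixed-point reformulation, the same contraction estimates for $a_n,d_n,b_n,l_n$, the same decomposition $z_n-x=(I-\gamma A^*A)(a_n-x)+\gamma A^*(b_n-u)$ (the paper expands $\|(I-\gamma A^*A)(a_n-x)\|^2$ directly rather than invoking the spectral bound, but the conclusion is identical), the same use of $\gamma\|A\|^2<2$ and $\gamma\|A\|\|B\|<2$ to pass to the $\delta_i=1+2\theta_{i+2}$, the same product-norm combination, and the same infinite-product argument. Your explicit derivation of $c_i<1\Leftrightarrow L_i-\Delta_i<\rho<L_i+\Delta_i$ actually fills in a step the paper only asserts.
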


\begin{proof} Given that $(x,y)$ is a solution of SSpVIP(1.6)-(1.9), that is, $x,y$ satisfy the following relations:
$$x= P_{C_1}(x- \rho F(x,y)),\leqno(3.1)$$
$$y= P_{C_2}(y- \rho G(x,y)), \leqno(3.2)$$
$$A(x)= P_{C_3}(A(x)- \lambda f(A(x),B(y))),\leqno(3.3)$$
$$B(y)= P_{C_4}(B(y)- \lambda g(A(x),B(y))). \leqno(3.4)$$

Since $F: H_1\times H_2\to H_1$ be $\alpha_{1}$-strongly monotone in the first argument and $(\beta_{1},\epsilon_{1})$-Lipschitz continuous, from Algorithm 1(2.5) and (3.1), we estimate
$$\|a_n-x\| = \|P_{C_1}(x_n- \rho F(x_n,y_n))-P_{C_1}(x- \rho F(x,y))\| \hspace{2in}$$
$$ \leq \|x_n-x- \rho( F(x_n,y_n)-F(x,y_n))\|+\rho \|F(x,y_n)-F(x,y)\|\hspace{1in}$$
$$\leq  \left(\|x_n-x\|^2- 2\rho \langle F(x_n,y_n)-F(x,y_n), x_n-x\rangle \right.\hspace{2in}$$
$$\left. +\rho^2\|F(x_n,y_n)-F(x,y_n)\|^2\right)^{\frac{1}{2}}+\rho \|F(x,y_n)-F(x,y)\|\hspace{1in}$$
$$\leq \theta_1\|x_n-x\| + \rho \epsilon_1\|y_n-y\|,\hspace{3in}$$ $$~~ \leqno(3.5)$$
where $\theta_1=\sqrt{1-2\rho \alpha_1+\rho^2\beta_1^2}.$

\vspace{.3cm}
Next, since $G: H_1\times H_2\to H_2$ be $\alpha_{2}$-strongly monotone in the second argument and $(\beta_{2},\epsilon_{2})$-Lipschitz continuous, from Algorithm 1(2.6) and (3.2), we have
$$\|d_n-y\|  \leq \theta_2\|y_n-y\| + \rho \epsilon_2\|x_n-x\|, \leqno(3.6)$$
where $\theta_2=\sqrt{1-2\rho \alpha_2+\rho^2\beta_2^2}.$

\vspace{.3cm}
Again, since $f: H_3\times H_4\to H_3$ be $\sigma_{1}$-strongly monotone in the first argument and $(\mu_{1},\nu_{1})$-Lipschitz continuous,  from Algorithm 1(2.7) and (3.3), we have
$$\|b_n-A(x)\|  \leq \theta_3\|A(a_n)-A(x)\| + \lambda \nu_1\|B(d_n)-B(y)\|, \leqno(3.7)$$
where $\theta_3=\sqrt{1-2\lambda \sigma_1+\lambda^2\mu_1^2}.$

\vspace{.3cm}
Since $g: H_3\times H_4\to H_4$ be $\sigma_{2}$-strongly monotone in the second argument and $(\mu_{2},\nu_{2})$-Lipschitz continuous,  from Algorithm 1(2.8) and (3.4), we have
$$\|l_n-B(y)\|  \leq \theta_4\|B(d_n)-B(y)\| + \lambda \nu_2\|A(a_n)-A(x)\|, \leqno(3.8)$$
where $\theta_4=\sqrt{1-2\lambda \sigma_2+\lambda^2\mu_2^2}.$

\vspace{.3cm}
Now, using the definition of $A^*$, fact that $A^*$ is a bounded linear operator with $\|A^*\|=\|A\|$, and condition $\gamma \in \left(0, \min \left\{\frac{2}{\|A\|^2},\frac{2}{\|B\|^2}\right \}\right)$, we have
$$\|a_n-x-\gamma A^*(A(a_n)-A(x))\|^2\hspace{3.5in}$$
$$=\|a_n-x\|^2-2\gamma \langle a_n-x,A^*(A(a_n)-A(x))\rangle + \gamma^2\|A^*(A(a_n)-A(x))\|^2$$
$$\hspace{-1.8in}\leq\|a_n-x\|^2- \gamma(2- \gamma\|A\|^2) \|A(a_n)-A(x)\|^2 $$
$$\leq \|a_n-x\|^2.\hspace{3.9in}$$ $$~~\leqno(3.9)$$

Similarly, using the definition of $B^*$, fact that $B^*$ is a bounded linear operator with $\|B^*\|=\|B\|$, and condition $\gamma \in \left(0, \min \left\{\frac{2}{\|A\|^2},\frac{2}{\|B\|^2}\right\}\right)$, we have
$$\|d_n-y-\gamma B^*(B(d_n)-B(y))\|\leq \|d_n-y\|.\leqno(3.10)$$

From Algorithm 1(2.9),(3.5),(3.7) and (3.9), we have the following estimate:
$$\|x_{n+1}-x\|\hspace{4in}$$
$$\leq (1-\alpha_n)\|x_{n}-x\|\hspace{4in}$$
$$+\alpha_n \left[\|a_{n}-x-\gamma A^*(A(a_n)-A(x))\| +\gamma\|A\|\|b_n-A(x)\|\right]\hspace{3in}$$
$$\leq (1-\alpha_n)\|x_{n}-x\|\hspace{4in}$$
$$+\alpha_n \left[\|a_{n}-x\|+\gamma \|A\|[\theta_3\|A(a_n)-A(x)\|+\lambda\nu_1\|B(d_n)-B(y)\|]\right]$$
$$\leq (1-\alpha_n)\|x_{n}-x\|\hspace{4in}$$
$$+\alpha_n \left[(1+\gamma \|A\|^2\theta_3)\|a_{n}-x\|+\lambda\nu_1\|A\|\|B\|\|d_n-y\|\right]\hspace{1in}$$
$$\leq (1-\alpha_n)\|x_{n}-x\|\hspace{4in}$$
$$+\alpha_n \left[\delta_1\theta_1\|x_{n}-x\|+\rho\delta_1\epsilon_1\|y_n-y\| +\lambda\nu_1\gamma\|A\|\|B\|\theta_2\|y_n-y\| \right.\hspace{1in}$$
$$\left. +\lambda \rho\nu_1\epsilon_2\gamma\|A\|\|B\|\|x_n-x\|\right],\hspace{3in}$$ $$~~\leqno(3.11)$$

\noindent where $\delta_1=(1+\gamma \|A\|^2\theta_3)$.

\vspace{.3cm}
From Algorithm 1(2.10),(3.6),(3.8) and (3.10), we have the following estimate:
$$\|y_{n+1}-y\|\hspace{4in}$$
$$\leq (1-\alpha_n)\|y_{n}-y\|\hspace{4in}$$
$$+\alpha_n \left[\|d_{n}-y-\gamma B^*(B(d_n)-B(y))\|+\gamma\|B\|\|l_n-B(y)\|\right]$$
$$\leq (1-\alpha_n)\|y_{n}-y\|\hspace{4in}$$
$$+\alpha_n \left[(1+\gamma \|B\|^2\theta_4)\|d_{n}-y\|+\lambda\nu_2\gamma\|A\|\|B\|\|a_n-x\|\right]\hspace{1in}$$
$$\leq (1-\alpha_n)\|y_{n}-y\|\hspace{4in}$$
$$+\alpha_n \left[\delta_2\theta_2\|y_{n}-y\|+\rho\delta_2\epsilon_2\|x_n-x\| +\lambda\nu_2\gamma\|A\|\|B\|\theta_1\|x_n-x\| \right.\hspace{1in}$$
$$\left. +\lambda \rho\nu_2\epsilon_1\gamma\|A\|\|B\|\|y_n-y\|\right],\hspace{3in}$$ $$~~\leqno(3.12)$$

\noindent where $\delta_2=(1+\gamma \|B\|^2\theta_4)$.

\vspace{.3cm}
Now, define the norm $||.||_{\star}$ on $H_1\times H_2$ by
$$||(x,y)||_{\star} =||x||+||y||,~~~(x,y)\in H_1\times H_2.$$
We can easily show that $(H_1\times H_2, ||.||_{\star})$ is a Banach space.

\vspace{.3cm}
Since $\gamma\|A\|\|B\| <2$ then, using (3.11) and (3.12), we have the following estimate:
$$\|(x_{n+1},y_{n+1})-(x,y)\|_* =\|x_{n+1}-x\|+\|y_{n+1}-y\|\hspace{4in}$$
$$< (1-\alpha_n)[\|x_{n}-x\|+\|y_{n}-y\|]\hspace{.5in}$$
$$\hspace{.5in} +\alpha_n[\delta_1\theta_1+\rho\delta_2\epsilon_2 +2\lambda(\rho\nu_1\epsilon_2+\nu_2\theta_1)]\|x_{n}-x\|$$
$$\hspace{.5in}+ \alpha_n[\delta_2\theta_2+\rho\delta_1\epsilon_1 +2\lambda(\rho\nu_2\epsilon_1+\nu_1\theta_2)]\|y_{n}-y\|$$
$$= [1-\alpha_n(1-\theta)]\|(x_{n},y_{n})-(x,y)\|_*,\hspace{-.2in}\leqno(3.13)$$
where $\theta= \max \{k_1,k_2\};~~k_1=e_1 \theta_1+\rho e_3;~~k_2=e_2 \theta_2+\rho e_4;~~e_1= \delta_1+2\lambda \nu_2;~~e_2= \delta_2+2\lambda \nu_1;~~e_3= \delta_2\epsilon_2+2\lambda\nu_1\epsilon_2;~~e_4= \delta_1\epsilon_1+2\lambda\nu_2\epsilon_1.$

\vspace{0.3cm}
Thus we obtain
 $$\|(x_{n+1},y_{n+1})-(x,y)\|_* < \prod\limits^{n}_{r=1}[1-\alpha_r(1-\theta)] \|(x_{0},y_{0})-(x,y)\|_*. \leqno(3.14)$$

Since $p_i= \frac{e_{i+2}}{e_{i}}$ and $q_i=\frac{1}{e_i}$, it follows from given conditions on $\rho,~\lambda,~\gamma$ that $\theta \in (0,1)$. Since $\sum \limits^{\infty}_{n=1} \alpha_n=+\infty$ and $\theta \in (0,1)$, it implies in the light of \cite{kaz1} that
$$\lim\limits_{n\to \infty}\prod\limits^{n}_{r=1}[1-\alpha_r(1-\theta)]=0.$$

 Thus, it follows from  (3.14) that $\{(x_{n+1},y_{n+1})\}$  converges strongly to $(x,y)$ as $n \to +\infty$, that is, $x_n \to x$ and $y_n \to y$ as $n \to +\infty$. Further, it follows from (3.5) and (3.6), respectively, that $a_n \to x$ and $d_n \to y$  as $n \to +\infty$. Since $A,~B$ are continuous, it follows  that $A(a_n) \to A(x)$ and  $B(d_n) \to B(y)$  as $n \to +\infty$. Hence, it follows from (3.7) and (3.8), respectively, that $b_n \to A(x)$ and  $l_n \to B(y)$  as $n \to +\infty$.   This completes the proof.
\end{proof}
\vspace{.3cm}
Now, we give the following corollaries which are consequences of Theorem {\ref{main}}.

\vspace{.3cm}
If we set $H_3=H_1,~H_4=H_2,~C_3=C_1,~C_4=C_2,~f=F,~g=G,$ and $A=B=I$, identity mapping, then Theorem {\ref{main}} reduces to the following theorem for the convergence analysis of Algorithm 2 for SVIP(1.2)-(1.3).

\vspace{.3cm}
\begin{cor} For each  $s\in\{1,2\},$ let $C_{s}$ be a nonempty, closed and convex subset of real Hilbert space $H_{s}$;  let $F: H_1\times H_2\to H_1$ be $\alpha_{1}$-strongly monotone in the first argument and $(\beta_{1},\epsilon_{1})$-Lipschitz continuous, and
  let $G: H_1\times H_2\to H_2$ be $\alpha_{2}$-strongly monotone in the second argument and $(\epsilon_{2},\beta_{2})$-Lipschitz continuous.  Suppose $(x,y) \in C_1 \times C_2$ is a solution to SVIP(1.2)-(1.3) then the sequence $\{(x_n, y_n)\}$ generated by Algorithm 2  converges strongly to $(x,y)$ provided that  for $i\in \{1,2\}$, the constant $\rho$  satisfy the conditions:
$$0<\rho < \min_{1\leq i \leq 2}\left\{\frac{2(\alpha_i-\epsilon_i)}{\beta_i^2-\epsilon_i^2} \right\}$$
\end{cor}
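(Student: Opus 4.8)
The plan is to deduce Corollary~3.2 from Theorem~\ref{main} by specializing the data, and then to simplify the resulting parameter conditions until only the displayed inequality on $\rho$ survives. First I would set $H_3=H_1$, $H_4=H_2$, $C_3=C_1$, $C_4=C_2$, $f=F$, $g=G$ and $A=B=I$. Under this substitution Algorithm~1 collapses to Algorithm~2 exactly as noted in Section~2, since the auxiliary iterates $b_n$ and $l_n$ become $P_{C_1}(a_n-\lambda F(a_n,d_n))$ and $P_{C_2}(d_n-\lambda G(a_n,d_n))$ but enter $x_{n+1},y_{n+1}$ only through $\gamma I^*(b_n-a_n)$; one must observe that the $\gamma$-correction terms drop out when we are allowed to take $\gamma$ as small as we please. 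Concretely, because $A=I$ the admissible range for $\gamma$ in Theorem~\ref{main} is $(0,2)$, and we are free to let $\gamma\to 0^+$ in the parameter bookkeeping; in that limit $\theta_3,\theta_4$ no longer matter and $\delta_i=1+2\theta_{i+2}\to\ldots$ — here I would instead simply re-run the estimate directly rather than chase the limit through $\delta_i$.

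So the cleaner route is: take the chain of inequalities (3.5)--(3.6) which hold verbatim (they never used the split structure), namely
$$\|a_n-x\|\le\theta_1\|x_n-x\|+\rho\epsilon_1\|y_n-y\|,\qquad \|d_n-y\|\le\theta_2\|y_n-y\|+\rho\epsilon_2\|x_n-x\|,$$
with $\theta_i=\sqrt{1-2\rho\alpha_i+\rho^2\beta_i^2}$, and combine them with (2.9)--(2.10) in the reduced form $x_{n+1}=(1-\alpha_n)x_n+\alpha_n a_n$, $y_{n+1}=(1-\alpha_n)y_n+\alpha_n d_n$. This gives
$$\|x_{n+1}-x\|\le(1-\alpha_n)\|x_n-x\|+\alpha_n\theta_1\|x_n-x\|+\alpha_n\rho\epsilon_1\|y_n-y\|,$$
and symmetrically for $\|y_{n+1}-y\|$. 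Adding the two and using the product norm $\|(x,y)\|_\star=\|x\|+\|y\|$ as in the proof of Theorem~\ref{main} yields
$$\|(x_{n+1},y_{n+1})-(x,y)\|_\star\le[1-\alpha_n(1-\theta)]\,\|(x_n,y_n)-(x,y)\|_\star,$$
where now $\theta=\max\{\theta_1+\rho\epsilon_2,\ \theta_2+\rho\epsilon_1\}$ — the contraction constant from (3.13) with $\delta_1=\delta_2=1$, $\nu_1=\nu_2=0$, $\gamma\|A\|\|B\|$ replaced by the identity contribution. Then, exactly as before, $\sum\alpha_n=+\infty$ together with $\theta\in(0,1)$ forces $\prod_{r=1}^n[1-\alpha_r(1-\theta)]\to 0$, hence strong convergence.

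The one real computation is verifying that the stated condition $0<\rho<\min_{1\le i\le 2}\{2(\alpha_i-\epsilon_i)/(\beta_i^2-\epsilon_i^2)\}$ is precisely what makes $\theta<1$. For that I would require each summand $\theta_i+\rho\epsilon_j<1$ (with $j\ne i$), i.e. $\sqrt{1-2\rho\alpha_i+\rho^2\beta_i^2}<1-\rho\epsilon_j$; squaring the right side needs $\rho\epsilon_j<1$, and then the inequality becomes $1-2\rho\alpha_i+\rho^2\beta_i^2<1-2\rho\epsilon_j+\rho^2\epsilon_j^2$, i.e. $\rho(\beta_i^2-\epsilon_j^2)<2(\alpha_i-\epsilon_j)$, which for $\beta_i>\epsilon_j$ is $\rho<2(\alpha_i-\epsilon_j)/(\beta_i^2-\epsilon_j^2)$. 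Taking the minimum over $i$ (and matching indices $j=i$ in the corollary's notation, where $\epsilon_1,\epsilon_2$ play symmetric roles) gives the stated bound, and one checks $\rho\epsilon_j<1$ is implied. The main obstacle is bookkeeping: ensuring the index pairing $(i,j)$ in the reduced estimate matches the single-index form in the corollary's hypothesis, and confirming that positivity of the bound (hence nonemptiness of the admissible range) requires the implicit assumptions $\alpha_i>\epsilon_i$ and $\beta_i>\epsilon_i$, which are the degenerate analogues of the conditions $\alpha_i>p_iq_i+\sqrt{(\beta_i^2-p_i^2)(1-q_i^2)}$ and $\beta_i>p_i$ in Theorem~\ref{main}. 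Beyond that, every other line is copied mutatis mutandis from the proof of the main theorem.
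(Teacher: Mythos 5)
Your overall route is the natural one and, in substance, the same as the paper's: the paper offers no separate proof of this corollary beyond asserting that Theorem \ref{main} ``reduces'' to it, and your decision to re-run the estimates (3.5)--(3.6) directly with the reduced scheme $x_{n+1}=(1-\alpha_n)x_n+\alpha_n a_n$, $y_{n+1}=(1-\alpha_n)y_n+\alpha_n d_n$ is a fair (indeed more careful) reading, since with $A=B=I$ the $\gamma$-correction terms of Algorithm 1 do not literally vanish. Up to the inequality $\|(x_{n+1},y_{n+1})-(x,y)\|_\star\le[1-\alpha_n(1-\theta)]\|(x_n,y_n)-(x,y)\|_\star$ with $\theta=\max\{\theta_1+\rho\epsilon_2,\ \theta_2+\rho\epsilon_1\}$, everything you write is correct.

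The gap is in the final identification of the parameter condition. Forcing $\theta<1$ gives the \emph{cross-indexed} bounds $\rho<2(\alpha_1-\epsilon_2)/(\beta_1^2-\epsilon_2^2)$ and $\rho<2(\alpha_2-\epsilon_1)/(\beta_2^2-\epsilon_1^2)$, whereas the corollary asserts the \emph{matched} bound $\rho<\min_i 2(\alpha_i-\epsilon_i)/(\beta_i^2-\epsilon_i^2)$. Your claim that ``$\epsilon_1,\epsilon_2$ play symmetric roles'' does not close this: $\epsilon_1$ is the Lipschitz constant of $F$ in its second argument and $\epsilon_2$ that of $G$ in its first, and nothing forces them to be comparable, so neither condition implies the other in general. (Formally specializing Theorem \ref{main}'s constants, $\delta_i\to1$, $\nu_i\to0$, hence $p_i=\epsilon_j$, $q_i=1$, $L_i-\Delta_i=0$, $L_i+\Delta_i=2(\alpha_i-\epsilon_j)/(\beta_i^2-\epsilon_j^2)$ with $j\neq i$, also yields the cross-indexed range, so the corollary's display is arguably mis-indexed --- but that is the statement you were asked to prove.) To prove the statement as written you need one extra idea: replace $\|(x,y)\|_\star=\|x\|+\|y\|$ by a weighted norm $c\|x\|+\|y\|$, $c>0$. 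The matched hypotheses give $\rho\epsilon_i<1-\theta_i$ for $i=1,2$ (here one should also verify $\rho\epsilon_i<1$, which follows from $\alpha_i\le\beta_i$ and the implicit assumptions $\alpha_i>\epsilon_i$, $\beta_i>\epsilon_i$ via $\rho<2/(\beta_i+\epsilon_i)$), hence $\rho^2\epsilon_1\epsilon_2<(1-\theta_1)(1-\theta_2)$, and any $c$ with $\rho\epsilon_2/(1-\theta_1)<c<(1-\theta_2)/(\rho\epsilon_1)$ makes both weighted coefficients $\theta_1+\rho\epsilon_2/c$ and $\theta_2+c\rho\epsilon_1$ strictly less than one; after that your product argument goes through verbatim. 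As written, your proof establishes convergence under the cross-indexed condition, not under the stated one.
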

\vspace{.3cm}
If we set $H_2=H_1,~H_4=H_3,~C_2=C_1,~C_4=C_3,~G=F,~g=f,~B=A,$ and $y=x$,  then Theorem {\ref{main}} reduces to the following corollary for the convergence analysis of Algorithm 3 for  SpVIP(1.10)-(1.11).

\vspace{.3cm}
\begin{cor} For each  $s\in\{1,3\},$ let $C_{s}$ be a nonempty, closed and convex subset of real Hilbert space $H_{s}$;  let $F: H_1\times H_1\to H_1$ be $\alpha_{1}$-strongly monotone in the first argument and $(\beta_{1},\epsilon_{1})$-Lipschitz continuous, and
let $f: H_3\times H_3\to H_3$ be $\sigma_{1}$-strongly monotone in the first argument and $(\mu_{1},\nu_{1})$-Lipschitz continuous. Let $A:H_1 \to H_3$  be bounded linear operator. Suppose $x \in C_1$ is a solution to SSpVIP(1.10)-(1.11) then the sequence $\{x_n\}$ generated by Algorithm 3  converges strongly to $x$ provided that  the constants $\rho, \lambda, \gamma$  satisfy the conditions:
$$\left| \rho- \frac{\alpha_1-\epsilon_1}{\beta_1^2-\epsilon_1^2} \right| <\frac{\sqrt{(\alpha_1-\epsilon_1)^2-(\beta_1^2 -\epsilon_1^2)(1-k^2)}}{\beta_1^2 -\epsilon_1^2};$$
$$\alpha_1>\epsilon_1+\sqrt{(\beta_1^2 -\epsilon_1^2)(1-k^2)};~~\beta_1 > \epsilon_1;~~ k=\frac{1}{\delta_1}<1;$$
$$\delta_1=(1+2\theta_{3});~~\theta_{3}=\sqrt{1-2\lambda\sigma_1+\lambda^2\mu_1^2};~~ \lambda >0;~~ \gamma \in \left(0, \frac{2}{\|A\|^2}\right).$$
\end{cor}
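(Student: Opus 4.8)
The plan is to read this corollary off Theorem~\ref{main} by restricting Algorithm~1 to the diagonal, and then to simplify the parameter window by hand. First I would record that, under the identifications $H_2=H_1$, $H_4=H_3$, $C_2=C_1$, $C_4=C_3$, $G=F$, $g=f$, $B=A$, Algorithm~1 started from $x_0=y_0$ stays on the diagonal: since $F=G$ and $A=B$, an easy induction gives $y_n=x_n$, $d_n=a_n$ and $l_n=b_n$ for all $n$, so the single sequence produced by Algorithm~3 is exactly the common value of the two sequences of Algorithm~1. By the projection characterization (2.4), a point $x\in C_1$ solves SpVIP(1.10)-(1.11) if and only if $(x,x)$ solves the reduced system, so the fixed-point relations (3.1)-(3.4) collapse to $x=P_{C_1}(x-\rho F(x,x))$ and $A(x)=P_{C_3}(A(x)-\lambda f(A(x),A(x)))$.

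Next I would reproduce the two per-step estimates of Theorem~\ref{main} in this single-variable setting. Splitting $F(x_n,x_n)-F(x,x)=[F(x_n,x_n)-F(x,x_n)]+[F(x,x_n)-F(x,x)]$ and using nonexpansiveness of $P_{C_1}$, $\alpha_1$-strong monotonicity in the first argument, and $(\beta_1,\epsilon_1)$-Lipschitz continuity exactly as for (3.5), I obtain
$$\|a_n-x\|\leq(\theta_1+\rho\epsilon_1)\|x_n-x\|,\qquad\theta_1=\sqrt{1-2\rho\alpha_1+\rho^2\beta_1^2};$$
the only change from (3.5) is that the second-argument term is now measured by $\|x_n-x\|$ because $y_n=x_n$. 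The same computation applied to $f$ gives $\|b_n-A(x)\|\leq(\theta_3+\lambda\nu_1)\|A(a_n)-A(x)\|\leq(\theta_3+\lambda\nu_1)\|A\|\,\|a_n-x\|$ with $\theta_3=\sqrt{1-2\lambda\sigma_1+\lambda^2\mu_1^2}$. Feeding these together with the averaged-operator bound (3.9) and $\|A^*\|=\|A\|$ into the update $x_{n+1}=(1-\alpha_n)x_n+\alpha_n[a_n+\gamma A^*(b_n-A(a_n))]$, and then bounding $\gamma\|A\|^2<2$, yields the one-step contraction
$$\|x_{n+1}-x\|\leq[1-\alpha_n(1-\theta)]\,\|x_n-x\|,\qquad\theta=\delta_1(\theta_1+\rho\epsilon_1),$$
with $\delta_1=1+2\theta_3$; this is precisely the diagonal value of the constant $\theta$ appearing in (3.13).

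The main obstacle is the purely algebraic verification that the stated window for $\rho,\lambda,\gamma$ is exactly what makes $\theta\in(0,1)$. Requiring $\theta<1$ is the same as $\theta_1+\rho\epsilon_1<k$ with $k=1/\delta_1$; since the right-hand side must be positive, I would isolate $\theta_1$ and square, turning the requirement into a quadratic inequality in $\rho$ of the form $(\beta_1^2-\epsilon_1^2)\rho^2-2(\alpha_1-\epsilon_1)\rho+(1-k^2)<0$. Here $\beta_1>\epsilon_1$ makes the leading coefficient positive, $k<1$ keeps the constant term nonnegative, and the discriminant condition $\alpha_1>\epsilon_1+\sqrt{(\beta_1^2-\epsilon_1^2)(1-k^2)}$ guarantees two real roots and hence a nonempty solution set; completing the square then identifies that set with the stated interval $\left|\rho-\frac{\alpha_1-\epsilon_1}{\beta_1^2-\epsilon_1^2}\right|<\frac{\sqrt{(\alpha_1-\epsilon_1)^2-(\beta_1^2-\epsilon_1^2)(1-k^2)}}{\beta_1^2-\epsilon_1^2}$. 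The bookkeeping to watch is where $\gamma\|A\|^2<2$ (equivalently $\gamma\in(0,2/\|A\|^2)$) is invoked to pass from the exact amplification factor $1+\gamma\|A\|^2(\theta_3+\lambda\nu_1)$ to the cruder constant $\delta_1$, and confirming $k-\rho\epsilon_1>0$ so that the squaring step is reversible.

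Finally, with $\theta\in(0,1)$ fixed and $\{\alpha_n\}\subseteq(0,1)$ satisfying $\sum_n\alpha_n=+\infty$, iterating the one-step contraction gives $\|x_{n+1}-x\|\leq\prod_{r=1}^{n}[1-\alpha_r(1-\theta)]\,\|x_0-x\|$, and the product tends to $0$ by the lemma from \cite{kaz1} already used in Theorem~\ref{main}. Hence $x_n\to x$ strongly; the convergences $a_n\to x$ and $b_n\to A(x)$ then follow from the two per-step estimates and continuity of $A$, just as in the closing paragraph of the proof of Theorem~\ref{main}.
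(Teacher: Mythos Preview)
Your approach is exactly the paper's: the paper proves this corollary in one sentence, by declaring that Theorem~\ref{main} reduces to it under the identifications $H_2=H_1$, $H_4=H_3$, $C_2=C_1$, $C_4=C_3$, $G=F$, $g=f$, $B=A$, $y=x$. Your proposal simply carries out that specialization by re-running the estimates of Theorem~\ref{main} on the diagonal, which is more than the paper itself writes down.

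Two computational slips to fix, however. First, when you feed $\|b_n-A(x)\|\le(\theta_3+\lambda\nu_1)\|A\|\,\|a_n-x\|$ into the update and bound $\gamma\|A\|^2<2$, the resulting amplification factor is $1+\gamma\|A\|^2(\theta_3+\lambda\nu_1)<1+2\theta_3+2\lambda\nu_1=\delta_1+2\lambda\nu_1$, not $\delta_1=1+2\theta_3$; you cannot ``pass to the cruder constant $\delta_1$'' because $\lambda\nu_1>0$ makes the true bound larger. In the notation of Theorem~\ref{main} this is the quantity $e_1=\delta_1+2\lambda\nu_2$, which on the diagonal becomes $\delta_1+2\lambda\nu_1$. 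Second, squaring $\theta_1<k-\rho\epsilon_1$ gives
\[
(\beta_1^2-\epsilon_1^2)\rho^2-2(\alpha_1-k\epsilon_1)\rho+(1-k^2)<0,
\]
with $k\epsilon_1$, not $\epsilon_1$, in the linear term; your completed square therefore has the wrong center and radius. Both slips happen to land you on the conditions printed in the corollary, but the algebra as written does not justify them; if you are going to give an explicit derivation (which the paper does not), the intermediate constants should be tracked correctly.
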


\vspace{0.3cm}
\begin{cor} For each  $s\in\{1,3\},$ let $C_{s}$ be a nonempty, closed and convex subset of real Hilbert space $H_{s}$;  let $F: H_1\times H_1\to H_1$ be $\alpha_{1}$-strongly mixed monotone  and $\beta_{1}$-mixed Lipschitz continuous, and
let $f: H_3\times H_3\to H_3$ be $\sigma_{1}$-strongly mixed monotone and $\mu_{1}$-mixed Lipschitz continuous. Let $A:H_1 \to H_3$  be bounded linear operator. Suppose $ x \in C_1 $ is a solution to SSpVIP(1.10)-(1.11) then the sequence $\{x_n\}$ generated by Algorithm 3  converges strongly to $x$ provided that  the constants $\rho, \lambda, \gamma$  satisfy the conditions:
$$\left| \rho- \frac{\alpha_1}{\beta_1^2} \right| <\frac{\sqrt{\alpha_1^2-\beta_1^2 (1-k^2)}}{\beta_1^2};$$
$$\alpha_1>\beta_1\sqrt{1-k^2};~~ k=\frac{1}{\delta_1}<1;$$
$$\delta_1=(1+2\theta_{3});~~\theta_{3}=\sqrt{1-2\lambda\sigma_1+\lambda^2\mu_1^2};~~ \lambda >0;~~ \gamma \in \left(0, \frac{2}{\|A\|^2}\right).$$

\end{cor}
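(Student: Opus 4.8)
The plan is to specialize the proof of Theorem~\ref{main} to the degenerate data $H_2=H_1$, $H_4=H_3$, $C_2=C_1$, $C_4=C_3$, $G=F$, $g=f$, $B=A$, $y=x$ that produces Algorithm~3. The only twist is that along this reduction $F$ and $f$ are evaluated exclusively on the diagonal (since $y_n=x_n$ forces $d_n=a_n$ and $B(d_n)=A(a_n)$), so the role played in Theorem~\ref{main} by first-argument strong monotonicity and $(\beta_1,\epsilon_1)$-Lipschitz continuity is now taken over by $\alpha_1$-strong mixed monotonicity and $\beta_1$-mixed Lipschitz continuity of $F$ (and $\sigma_1$-strong mixed monotonicity and $\mu_1$-mixed Lipschitz continuity of $f$), with the cross coefficients $\epsilon_1,\nu_1$ effectively set to $0$. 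In fact one may simply invoke the preceding corollary — the convergence statement for Algorithm~3 in the $(\beta_1,\epsilon_1)$-Lipschitz setting — and put $\epsilon_1=\nu_1=0$, taking $\beta_1,\mu_1$ to be the mixed Lipschitz constants; its hypotheses then collapse verbatim to those listed here. I will nonetheless outline the self-contained argument.

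First, by the projection characterisation (2.4), a point $x\in C_1$ solves SpVIP(1.10)-(1.11) iff $x=P_{C_1}(x-\rho F(x,x))$ and $A(x)=P_{C_3}(A(x)-\lambda f(A(x),A(x)))$. Using these two fixed-point relations, the nonexpansiveness of $P_{C_1},P_{C_3}$, and the bounds $\langle F(x_n,x_n)-F(x,x),x_n-x\rangle\ge\alpha_1\|x_n-x\|^2$ and $\|F(x_n,x_n)-F(x,x)\|\le\beta_1\|x_n-x\|$, I get — as in the derivation of (3.5), but now with no cross term — $\|a_n-x\|\le\theta_1\|x_n-x\|$ with $\theta_1=\sqrt{1-2\rho\alpha_1+\rho^2\beta_1^2}$, and likewise $\|b_n-A(x)\|\le\theta_3\|A(a_n)-A(x)\|$ with $\theta_3=\sqrt{1-2\lambda\sigma_1+\lambda^2\mu_1^2}$. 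The averaging estimate (3.9), $\|a_n-x-\gamma A^*(A(a_n)-A(x))\|\le\|a_n-x\|$, holds since $\gamma\in(0,2/\|A\|^2)$. Feeding this into the update $x_{n+1}=(1-\alpha_n)x_n+\alpha_n[a_n+\gamma A^*(b_n-A(a_n))]$, using $\|A^*\|=\|A\|$ and $\gamma\|A\|^2<2$, yields
$$\|x_{n+1}-x\|\le(1-\alpha_n)\|x_n-x\|+\alpha_n(1+\gamma\|A\|^2\theta_3)\theta_1\|x_n-x\|<[1-\alpha_n(1-\theta)]\|x_n-x\|,$$
where $\theta:=\delta_1\theta_1$ and $\delta_1=1+2\theta_3$. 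Iterating gives $\|x_{n+1}-x\|<\prod_{r=1}^n[1-\alpha_r(1-\theta)]\,\|x_0-x\|$; once $\theta\in(0,1)$, the hypothesis $\sum\alpha_n=+\infty$ drives this product to $0$ (the lemma used in \cite{kaz1}), so $x_n\to x$, whence $a_n\to x$, and by continuity of $A$, $A(a_n)\to A(x)$ and then $b_n\to A(x)$.

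The only step needing genuine care is verifying that the displayed conditions on $\rho,\lambda,\gamma,k$ are exactly equivalent to $\theta=\delta_1\theta_1\in(0,1)$. Here $\theta_3$ is a well-defined real number because Cauchy--Schwarz applied to the mixed monotonicity and mixed Lipschitz inequalities for $f$ gives $\sigma_1\le\mu_1$, so $1-2\lambda\sigma_1+\lambda^2\mu_1^2\ge(1-\lambda\mu_1)^2\ge0$ and $\delta_1=1+2\theta_3\ge1$; the stipulation $k=1/\delta_1<1$ then forces $\delta_1>1$, and similarly $\alpha_1\le\beta_1$ makes $\theta_1$ real. Now $\theta<1\iff\theta_1<1/\delta_1=k\iff\theta_1^2<k^2\iff\beta_1^2\rho^2-2\alpha_1\rho+(1-k^2)<0$, and this quadratic inequality in $\rho$ has nonempty solution set precisely when its discriminant $4\alpha_1^2-4\beta_1^2(1-k^2)$ is positive, i.e. $\alpha_1>\beta_1\sqrt{1-k^2}$, in which case the solution interval is exactly $|\rho-\alpha_1/\beta_1^2|<\sqrt{\alpha_1^2-\beta_1^2(1-k^2)}/\beta_1^2$ — these are precisely the stated hypotheses. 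I expect this parameter bookkeeping, rather than any analytic difficulty, to be the crux; everything else is a transcription of the Theorem~\ref{main} argument with the cross terms deleted.
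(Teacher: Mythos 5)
Your proposal is correct and follows essentially the same route as the paper, which states this corollary as a specialization of Theorem~\ref{main} (with $H_2=H_1$, $C_2=C_1$, $G=F$, $g=f$, $B=A$, $y=x$) and gives no separate proof; your self-contained rederivation, replacing the split estimate in (3.5) by the direct diagonal bounds from mixed monotonicity and mixed Lipschitz continuity to get $\|a_n-x\|\le\theta_1\|x_n-x\|$ and $\|b_n-A(x)\|\le\theta_3\|A(a_n)-A(x)\|$, is exactly the needed adaptation, and your parameter bookkeeping ($\theta=\delta_1\theta_1<1$ iff the stated interval for $\rho$) matches the corollary's hypotheses. The only caveat is your aside that one could ``invoke the preceding corollary with $\epsilon_1=\nu_1=0$'': a mixed monotone, mixed Lipschitz map need not satisfy the first-argument monotonicity and $(\beta_1,0)$-Lipschitz hypotheses of that corollary (the latter would force independence of the second argument), so that shortcut is not literally valid, but your main argument does not rely on it.
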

\vspace{.3cm}
\begin{rem}  It is of further research effort to extend the iterative method and results presented in this paper for the system of split variational inequality problems involving set-valued mappings.
\end{rem}



\bigskip
\bigskip


\begin{thebibliography}{20}



\bibitem{bai} C. Baiocchi and A.  Capelo, {\em Variational and Quasi-variational Inequalities}, Wiley, New York, 1984.
\bibitem{ben} A. Bensoussan and J. L. Lions, {\em  Applications of Variational Inequalities to Stochastic Control. North-Holland}, Amsterdam, 1982.
\bibitem{byr} C. Byrne, Y. Censor, A. Gibali and  S. Reich, {\em  Weak and strong convergence of algorithms for the  split common null point problem}, J. Nonlinear Convex Anal. {\bf 13}(2012) 759--775.
\bibitem{cen2}   Y. Censor, T.  Bortfeld, B. Martin and A.  Trofimov, {\em A unified approach for inversion problems in intensity modulated radiation therapy}, Physics in Medicine and Biology  {\bf 51} (2006)  2353--2365.
\bibitem{cen1}  Y. Censor and T.   Elfving, {\em  A multiprojection  algorithm using Bregman projections in product space}, Numerical Algorithms {\bf 8} (1994) 221--239.
\bibitem{cen3}   Y. Censor, A.  Gibali and S. Reich, {\em  Algorithms for the  split variational inequality problem}, Numerical Algorithms {\bf 59} (2012) 301--323.


\bibitem{coh} C. Cohen and F.  Chaplais, {\em Nested monotony for variational inequalities over product of spaces and convergence of iterative algorithms} J. Optim. Theory. Appl. {\bf 59} (1988) 185--195.
\bibitem{crank} J. Crank, {\em Free and Moving Boundary Problems}. Clarendon Press, Oxford, 1984.
\bibitem{fer} M. Ferris and J. S.  Pang, {\em  Engineering and economics applications of complementarity problems} SIAM Rev. {\bf 39} (1997) 669--713.
\bibitem{fich} G. Fichera, {\em Problemi elastostatici con vincoli unilaterali: Il problema di Signorini ambigue condizione al contorno}, Attem. Acad. Naz. Lincei. Mem. Cl. Sci. Nat. Sez. Ia {\bf{7}}(8) (1963/64) 91--140.
\bibitem{gian} F. Giannessi and A. Maugeri,  {\em  Variational Inequalities and Network Equilibrium Problems}, Plenum Press, New York, 1995.
\bibitem{glo} R. Glowinski, {\em  Numerical Methods for Nonlinear Variational Problems}, Springer, Berlin, 1984.
\bibitem{kas} G. Kassey and J. Kolumban, {\em  System of multi-valued variational inequalities}, Publ. Math. Debrecen {\bf 56} (2000) 185--195.
\bibitem{kaz1}  K. R. Kazmi, {\em Split nonconvex variational inequality problem},  Mathematical Sciences {\bf 7}(1):36 (2013) 1-5. (doi:10.1186/2251-7456-7-36)

\bibitem{kaz8}  K. R. Kazmi, F.A. Khan and M. Shahzad, {\em Existence and iterative approximation of a unique solution of a system of general quasi-variational inequality problems}, Thai J. Math. {\bf 8}(2) (2010) 405--417.
\bibitem{kaz7}  K. R. Kazmi, F.A. Khan and M. Shahzad, {\em  A system of generalized variational inclusions involving generalized $H(.,.)$-accretive mapping in real $q$-uniformly smooth Banach spaces}, Appl. Math. Comput. {\bf 217}(23) (2011) 9679--9688.
\bibitem{kaz6}  K. R. Kazmi and  S. H. Rizvi, {\em A hybrid extragradient method for approximating the common solutions of a variational inequality, a system of variational inequalities, a mixed equilibrium problem and a fixed point problem}, Appl. Math. Comput. {\bf 218}(9)  (2012)  5439--5452.
\bibitem{kaz5}  K. R. Kazmi and  S. H. Rizvi, {\em Iterative approximation of a common solution of a split equilibrium problem, a variational inequality problem and a fixed point problem},  J. Egyptian Math. Soc. {\bf 21} (2013)  44--51.
\bibitem{kaz4}  K. R. Kazmi and  S. H. Rizvi, {\em Iterative approximation of a common solution of a split generalized equilibrium problem and a fixed point problem for nonexpansive semigroup}, Mathematical Sciences {\bf 7}(1) (2013) (doi 10.1186/2251-7456-7-1)
\bibitem{kaz3}  K. R. Kazmi and  S. H. Rizvi, {\em  An iterative method for split variational inclusion problem and fixed point problem for a nonexpansive mapping},  Optim Lett. {bf 8}(2014) 1113–1124. (doi 10.1007/s11590-013-0629-2)
\bibitem{kaz2}  K. R. Kazmi and S. H.  Rizvi,  {\em Implicit iterative method for approximating a common solution of split equilibrium problem and  fixed point problem for a nonexpansive semigroup},  Arab J. Math. Sci. {\bf 20}(1) (2014) 57--75. (doi 10.1016/j.ajmsc.2013.04.002)


\bibitem{kiku} N. Kikuchi and J. T.  Oden, {\em  Contact Problems in Elasticity}, SIAM, Philadelphia, 1998.
\bibitem{kon} I. V. Konnov, {\em Relatively monotone variational inequalities over product sets}, Oper. Res. Lett. {\bf 28}  (2001) 21--26.
\bibitem{mou}  A. Moudafi, {\em Split monotone variational inclusions}, J. Optim. Theory Appl. {\bf 150} (2011) 275--283.
\bibitem{nag} A. Nagurney, {\em Network Economics: A Variational Inequality Approach}, Kluwer Academic Publishers, Dordrecht, 1993.

\bibitem{pang} J. S. Pang, {\em  Asymmetric variational inequalities over product of sets: Applications and iterative methods}, Math. Prog. {\bf 31} (1985)  206--219
\bibitem{stam} G. Stampacchia,  {\em Formes bilinearires coercitives sur les ensembles convexes}, C.R. Acad. Sci. Paris {\bf{258}} (1964) 4413--4416.


\bibitem{ver} R. U. Verma, {\it  Generalized system of relaxed cocoercive variational inequalities and projection methods}, J. Optim. Theory Appl. {\bf 121}(1) (2004) 203--210.

\bibitem{wit} R. Wittmann, {\em Approximation of fixed points of nonexpansive mappings}, Archive der Mathematik {\bf 58} (1992) 486--491.


\end{thebibliography}
\end{document}